\documentclass[12pt,reqno]{amsart}
\usepackage[centertags]{amsmath}
\usepackage{amsfonts,amssymb}
\usepackage{mathrsfs}
\usepackage{graphicx}

\theoremstyle{plain} 
\newtheorem{cor}{Corollary}

\newtheorem{thm}{Theorem}

\theoremstyle{definition}

\theoremstyle{remark}

\numberwithin{equation}{section}

 \DeclareMathOperator{\diag}{diag}

\DeclareMathOperator{\pd}{\partial}

\DeclareFontFamily{OT1}{pzc}{}
\DeclareFontShape{OT1}{pzc}{m}{it}{<-> s * [1.10] pzcmi7t}{}
\DeclareMathAlphabet{\mathpzc}{OT1}{pzc}{m}{it}

\DeclareMathSymbol{\R}{\mathalpha}{AMSb}{"52}
\DeclareMathSymbol{\C}{\mathalpha}{AMSb}{"43}
\newcommand{\mbb}[1]{\mathbb{#1}}

\newcommand{\K}{\mbb{K}}

\newcommand*{\Scale}[2][4]{\scalebox{#1}{$#2$}}%

\newcommand{\comment}[1]{}

\newcommand{\fq}{\mathfrak{q}}

\newcommand{\mathsc}[1]{{\normalfont\textsc{#1}}}
\newcommand{\scr}{\mathsc{r}}
\newcommand{\scs}{\mathsc{s}}

\newcommand{\by}{\mathbf{y}}
\newcommand{\bw}{\mathbf{w}}
\newcommand{\bh}{\mathbf{h}}

\newcommand{\mca}{\mathcal{A}}

\newcommand{\beqn} {\begin{equation} }
\newcommand{\eeqn}{  \end{equation}   }
\newcommand{\balign}{\begin{align}}
\newcommand{\ealign}{\end{align}}
\newcommand{\bsube}{\begin{subequations}}
\newcommand{\esube}{\end{subequations}}



\begin{document}

\title[]{Characterization of the class of canonical forms for systems of linear equations
}
\author[]{ JC Ndogmo}

\address{PO Box 728, Cresta 2118\\ South Africa
}
\email{ndogmoj@yahoo.com}

\begin{abstract}
The equivalence group is determined for systems  of linear ordinary differential equations in both the standard form and the normal form. It is then shown that the normal form of linear systems reducible by an invertible point transformation to the canonical equation $\by^{(n)}=0$ consists of copies of the same iterative equation. Other properties of iterative linear systems are also derived, as well as the superposition  formula for their general solution.
\end{abstract}

\keywords{Systems  of linear ordinary
differential equations,  Equivalence group, Iterative equations, Lie symmetry}
\subjclass[2010]{58D19, 34A30, 65F10}
%
\maketitle

\section{Introduction}
\label{s:intro}
One of the major difficulties associated with the study of systems of linear ordinary differential equations, and in particular their symmetry properties is the large number of parameters that occur in their expression as arbitrary coefficients. Therefore, most of the general results concerning the symmetry properties and point transformations of this type of equations are limited to second- and third-order systems \cite{lopez-slde, fels-bk, waf-16melesh, bagderin-bk, campoa-14bk, bk014, melesh014}. Moreover, the study of these systems has also been often limited in the recent literature to the case of constant coefficients \cite{waf-13bk, campoa-16bk, bk014}.\par

In a slightly wider context, Gonz\'alez-Gasc\'on and Gonz\'alez-L\'opez estimated  in \cite{gonzal-newres} upper bounds $d_{\mathpzc{m},n}$ for the dimension of the symmetry algebra of systems of ordinary differential equations ({\sc ode}s) of the general form
\begin{equation} \label{gen-sde}
\mathbf{y}^{(n)} = F(x, \mathbf{y}_{(n-1)}), \qquad \mathbf{y} \in \R^{\mathpzc{m}},
\end{equation}
where $\mathbf{y}_{(n-1)}$ denotes $\mathbf{y}= \mathbf{y}(x)$ and all its derivatives up to the order $n-1$. It was thus found in \cite{gonzal-newres}  the exact bound value $d_{\mathpzc{m}, 2}= \mathpzc{m}^2 + 4\mathpzc{m} +3,$ and the more general value $d_{\mathpzc{m}, n}= \mathpzc{m}^2 + n \mathpzc{m} + 3$ for $n> 2.$ The value of $d_{\mathpzc{m},2}$ had been obtained earlier by Markus \cite{markus, bk014} in an independent study. All these results for $d_{\mathpzc{m},n}$ are in fact extensions of the well-known results obtained by Lie himself \cite{lie-dn}  for $d_{1, n},$ i.e. for scalar equations.\par

In the case of second-order or third-order equations of the form \eqref{gen-sde}, their group classification has been studied for linear equations with arbitrary coefficients \cite{waf-16melesh, melesh014}, and most often with constant coefficients \cite{waf-13bk, campoa-16bk, bk014}. In the case of arbitrary coefficients, this study has generally been reduced in the recent literature to systems of two equations while the dimension of the system is often arbitrary for equations with constant coefficients \cite{campoa-16bk, bk014}. Another important aspect of equations of the form \eqref{gen-sde} is their linearization criteria, and several independent results have also been recently obtained about these criteria for second-order equations \cite{fels-bk, aminova-bk, bagderin-bk, merker-bk}.

A study more directly related to the topic of this paper concerns systems of the form \eqref{gen-sde} with the right-hand side equal to zero, and which will henceforth be referred to as the canonical form. In the case of scalar equations, Lie \cite{liecanonic} showed that for $n \geq 2$ an $n$th order {\sc ode} is of maximal symmetry if it is reducible by an invertible point transformation to the canonical form . More recently, Krause and Michel \cite{KM} proved the converse of this result in the case of linear equations and showed that in fact a  linear scalar equation is of maximal symmetry if and only if it is iterative. Some recent results have also been obtained about  linear scalar equations of maximal symmetry, concerning their transformation properties \cite{JF, Jmaxv2}, their coefficient characterization \cite{ML, J14, JF}, their first integrals or Hamiltonian forms \cite{Jmaxv1, Jmaxv3}.\par

In this paper we completely derive the equivalence group for linear systems of {\sc ode}s in both the standard form and the  normal form, where normal form here means the standard form in which the coefficient of the term of order $n-1$ has vanished. We then use this result to show that a linear system of {\sc ode}s is reducible by an invertible point transformation to the canonical form if and only if its normal form consists of copies of iterative equations with the same source parameters. Several other results concerning systems of iterative equations and their general solutions  are also obtained. The base field for all differential equations considered will be assumed to be an arbitrary field $K$ of characteristic zero.
\section{Iterations of linear equations}
\label{s:iterat}
We shall denote by $F[x_1, \dots, x_p]$ a differential function of the  variables $x_1, \dots, x_p.$ Let $\Psi= \scr \frac{d}{dx} + \scs$ be an ordinary differential operator, where $\scr= \left(r_{ij}\right)$ and $\scs= \left( s_{ij} \right)$ are $\mathpzc{m} \times \mathpzc{m}$ matrices  with entries the smooth functions $r_{ij}$  and $s_{ij}$ of the independent variable $x \in \K,$ and where $\scr$ is invertible.  Linear iterative equations are the iterations $\Psi^n[\by, \scr, \scs]\equiv \Psi^n[\by] =0$ of the first-order linear ordinary differential equation $\Psi[\by] \equiv \scr \by'+ \scs \by=0$  where $\by = \by (x)\in \K^\mathpzc{m},$  and given by
\begin{equation} \label{iterdef}
\Psi^n [\by] = \Psi^{n-1}\left[\Psi \by \right],\quad \text{ for $n\geq 1,\quad$ where }\quad \Psi^0 =I
\end{equation}

is the identity operator. This definition is a natural extension to vector equations of that given for scalar equations for instance in \cite{KM, JF}. A linear iterative equation of a general order $n$ thus has the form
\begin{equation}
\label{geniter}
\Psi^n [\by] \equiv K_n^0\, \by^{(n)} + K_{n}^1\, \by^{(n-1)} +  K_{n}^2\, \by^{(n-2)} + \dots +  K_{n}^{n-1}\, \by' +  K_{n}^n\, \by =0,
\end{equation}

where the $K_n^j$ are $\mathpzc{m} \times \mathpzc{m}$ matrices.  By writing down the recurrence relation among the $K_n^j$ similar to that found for  iterative scalar  equations in \cite[P. 3]{JF}, the equality $K_n^0= \scr^n$ is immediately found. In the sequel $1\times 1$ matrices will be identified with scalars while for scalar equations we shall usually use the notations $r, s,$ and $ y$ instead of  $\scr, \scs,$ and $ \by$ corresponding to the vector equations  counterpart. In the case of scalar equations, since the coefficients are scalars and hence commutative,   the identities
\begin{subequations}\label{kn12}
\begin{align}
K_{n}^1 &= r^{n-1} \left[ n s + \binom{n}{2} r' \right] \label{kn12a} \\
K_n^2  &=  r^{n-2} \left[ \binom{n}{2} \Psi s + \binom{n}{3} \left( 3 s r' + r r''
    + \frac{3n-5}{4} r'^2 \right) \right] \label{kn12b}
\end{align}
\end{subequations}
are readily  established. Moreover, the well-known change of the dependent variable
\begin{equation} \label{std-2nor}
y = g w,\quad \text{where g satisfies }\quad K_{n}^1 \, g + n\, K_{n}^0\, g'=0
\end{equation}
maps \eqref{geniter} into its  normal form. This transformation however simply amounts to relating $r$ and $s$ in such a way that $K_n^1 =0.$ Therefore, for given source parameters $r$ and $s$ of the operator $\Psi,$ an $n$th order scalar linear equation in normal form
\begin{subequations} \label{iternor}
\begin{align}
&y^{(n)} + A_n^2\, y^{(n-2)} + \dots + A_n^j\, y^{(n-j)} + \dots A_n^n\, y=0  \label{iternor1}\\
\intertext{is iterative if and only if }\vspace{-10mm}
& A_n^j= \frac{K_n^j}{r^n}\Bigg \vert_{K_n^1=0} , \qquad (2 \leq j \leq n), \label{iternor2}
\end{align}
\end{subequations}
 It follows from \eqref{kn12a} that the requirement that $K_n^1=0$ holds is equivalent to having
\begin{equation} \label{s(x)}
s = - \frac{1}{2} (n-1) r',
\end{equation}
and this shows in particular that any iterative scalar equation in normal form can be expressed
in terms of the parameter $r$ alone, i.e. depends on a single arbitrary function. Moreover, for any function $\xi=\xi(x)$ which may also be a matrix function,  by setting
\begin{equation} \label{A(r)}
\mathcal{A}(\xi)(x) =   \frac{1}{4} [\xi(x)]^{-2} \left([\xi'(x)]^2 - 2 \xi(x) \xi''(x) \right)
\end{equation}
it follows from \eqref{kn12b} that in \eqref{iternor} we have
\begin{equation} \label{an2}
A_2^2 = \mathcal{A}(r), \quad \text{ and more generally }\quad A_n^2 = \binom{n+1}{3} \mathcal{A}(r).
\end{equation}
In fact, as already noted in \cite{ML, JF}, the coefficients $A_n^j$ depend only on the function $\mathcal{A}(r)= A_2^2$ and its derivatives. For simplicity, it will often be convenient to denote the coefficient $A_2^2$ of the term of third highest order in \eqref{iternor1} simply by $\fq.$  \par

If we set $r= u^2$ for a certain nonzero scalar function  $u,$ the expression for $\mathcal{A}(r)= \mathcal{A}(u^2)$ in \eqref{A(r)} is much simpler and reduces to $-u''/u.$ Setting $\fq = \mathcal{A}(r)$ is thus equivalent to letting $u$ be a nonzero solution of the equation
\begin{equation} \label{srce}
y'' + \fq y=0,
\end{equation}

which is referred to as the second-order source equation for \eqref{iternor}. Differential operators generating linear iterative equations in the normal form \eqref{iternor} with the coefficients $A_n^j=A_n^j[\fq]$ expressed directly as a function of $\fq$ and its derivatives have been obtained in \cite{JF} and an improved version appears in \cite{Jmaxv3}. The most general form of  linear iterative scalar equations in  normal form is given for example for the low orders three and four by
\begin{subequations}\label{o}
\begin{align}
0=& 2 \fq' y + 4 \fq y' + y''' = 0 \label{o3}\\
0=&10 y' \fq'+10 \fq y''+3 y \left(3 \fq^2+\fq''\right)+y^{(4)} \label{o4},
\end{align}
\end{subequations}
while the corresponding second-order equation with the same source parameter is given by \eqref{srce}.

\section{Equivalence group}
\label{s:transfo}
The group of equivalence transformations is well known for the general  linear $n$th order scalar {\sc ode} \cite{schwarz, ndogftc}. For homogeneous equations,  it consists of invertible point transformations of the form
\begin{subequations} \label{eqvgpK}
\begin{alignat}{3}
x &= f(z),&\qquad y &= g(z) w,&\qquad & \label{eqvgp-stdK} \\
\intertext{ for the standard form, and }
x &= f(z),&\qquad y&= c \left[f'(z)\right]^{(n-1)/2} w,&\qquad & \label{eqvgp-stdK}
\end{alignat}
\end{subequations}
 for the normal form, where $f$ and $g$ are arbitrary functions and $c$ an arbitrary nonzero scalar. For the general linear $n$th order vector {\sc ode} which without loss of generality may be assumed to be in the homogeneous form
\begin{equation} \label{glinS}
0 =\by^{(n)} + B_n^1\, \by^{(n-1)}+ \dots + B_n^k\, \by^{(n-k)} + \dots + B_n^n\, \by,
\end{equation}
where $\by \in \K^\mathpzc{m}$ and the $B_n^k=(B_{ij}^{k}(x))$ for $n$ fixed are $\mathpzc{m} \times \mathpzc{m}$ matrices, the corresponding extension of \eqref{eqvgp-stdK} can be guessed. But since such a result does not seem to be available in the literature, we completely derive it in the next theorem, as well as its normal form counterpart.


\begin{thm} \label{eqvthm}

The equivalence group of the general linear system \eqref{glinS} consists of invertible point transformations of the form
\begin{subequations} \label{eqvthm}
\begin{align}
x&= f(z), \qquad \by = T \bw.  \label{eqvthm1}
\intertext{ These transformations  reduce for the normal form of \eqref{glinS} in which the coefficient $B_n^1$ of the term of order $n-1$ has vanished to}
x&= f(z), \qquad \by = f'(z)^{(n-1)/2}\, C\, \bw.  \label{eqvthm2}
\end{align}
 Here, $T= \left(T_{ij} (z) \right)$ is an $\mathpzc{m} \times \mathpzc{m}$ matrix and $f$ as well as the entries $T_{ij}$ of $T$ are smooth functions, while $C=(C_{ij}) \in \K^{\mathpzc{m}^2}$ is a constant matrix.
\end{subequations}
\end{thm}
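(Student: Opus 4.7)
The proof splits naturally into two stages matching the two parts of the theorem.

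\textbf{Stage one} establishes \eqref{eqvthm1}. The plan is to begin with a fully general point transformation $x = \varphi(z, \bw)$, $\by = \psi(z, \bw)$ and impose that it preserves the class of linear systems \eqref{glinS}. Prolonging to first order expresses $\by'$ as a function of $\bw'$, $\bw$, and $z$; requiring that the transformed equation be again linear in $\bw$ and its derivatives of order up to $n$ first forces $\varphi$ to be independent of $\bw$ (otherwise $\bw'$ would enter nonlinearly in the prolonged formula for $\by^{(n)}$), so that $x = f(z)$ with $f'(z) \neq 0$. Imposing linearity on $\psi$ then forces it to be affine in $\bw$, and the homogeneity of \eqref{glinS} removes the additive term, giving $\by = T(z)\bw$ with $T(z)$ an invertible $\mathpzc{m} \times \mathpzc{m}$ matrix of smooth functions.

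\textbf{Stage two} establishes \eqref{eqvthm2}. I apply the transformation $x = f(z)$, $\by = T(z)\bw$ to a normal-form equation (with $B_n^1 = 0$) and compute the resulting coefficient of $\bw^{(n-1)}$. After monic renormalization, this coefficient takes the form
\begin{equation*}
\tilde{B}_n^1 \;=\; n\, T^{-1} T'\, f'^{-1} \;-\; \tfrac{n(n-1)}{2}\, \tfrac{f''}{f'}\, f'^{-1}\, I,
\end{equation*}
up to conjugation by $T$ of terms that vanish when $B_n^1 = 0$. Setting $\tilde{B}_n^1 = 0$ to preserve normal form produces the matrix first-order linear ODE
\begin{equation*}
T'(z) \;=\; \tfrac{n-1}{2}\, \tfrac{f''(z)}{f'(z)}\, T(z),
\end{equation*}
whose scalar-coefficient structure immediately integrates to $T(z) = \bigl(f'(z)\bigr)^{(n-1)/2} C$ for an arbitrary constant matrix $C \in \K^{\mathpzc{m}^2}$, which is exactly \eqref{eqvthm2}.

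The main technical obstacle is the explicit derivation of the coefficient $\tilde{B}_n^1$ above. The scalar analogue is classical and underlies \eqref{eqvgp-stdK}, and the vector derivation proceeds term-by-term in the same fashion via repeated application of the chain and Leibniz rules to $\by^{(k)} = \frac{d^k}{dx^k}[T(z)\bw]$ with $\frac{d}{dx} = f'(z)^{-1}\frac{d}{dz}$. The only genuinely new subtlety is that $T$ and $T'$ need not commute with the coefficient matrices $B_n^k$; however, under the normal-form hypothesis $B_n^1 = 0$, all such noncommuting contributions to the new $(n-1)$st coefficient vanish, leaving only the scalar-valued ODE above. Stage one can be treated more briefly by paralleling the general theory of equivalence transformations of linear systems (see \cite{schwarz, ndogftc}), the key new observation being merely that the dependent-variable multiplier $g(z)$ of the scalar theory generalizes to an invertible matrix $T(z)$.
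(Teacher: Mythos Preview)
Your proposal is correct and follows essentially the same route as the paper's own proof: first force $f$ to depend on $z$ alone and $T$ to be linear in $\bw$ by inspecting the nonlinear terms in the transformed equation (with homogeneity killing the affine constant), then compute the $(n-1)$st-order coefficient of the transformed normal-form equation and solve the resulting first-order ODE for $T$. The only cosmetic difference is that the paper writes the vanishing condition entry-wise as $H_{ij}^1 = n\,T_{ij}'/f'^{\,n} - \binom{n}{2}\,T_{ij} f''/f'^{\,n+1} = 0$ and solves each scalar ODE separately, whereas you package it as a single matrix ODE; your displayed $\tilde{B}_n^1$ carries a harmless extra factor of $f'^{-1}$ relative to the monic normalization, but this does not affect the vanishing condition or the conclusion.
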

\begin{proof}
Let $x= f(z, \bw), \text{ and } \by = T (z, \bw),$  where $\bw=(w_1, \dots, w_\mathpzc{m})$, $f$ is a scalar function and $T= (T_1, \dots, T_\mathpzc{m})$ a vector function, be an invertible point transformation for \eqref{glinS}. The resulting transformation of \eqref{glinS} yields a polynomial in the $w_j$ and their derivatives, and the coefficients of the corresponding nonlinear monomials must vanish. If we denote by $p>1$ the highest exponent of $w_j'$ in this expansion, then the coefficient of $w_j'^{\,p}$ has a term of the form $T_1 B_{11}^n (f_{w_j})^p,$ where for a function $F=F(a_1, \dots, a_n)$ we use the notation $F_{a_i} = \frac{\pd F}{\pd a_i},$ $F_{a_i a_j} = \frac{\pd^2 F}{\pd a_i \pd a_j},$ etc. Due to the arbitrariness of the coefficient $B_{11}^n$ this term must vanish, and the invertibility of the transformation then implies that $f_{w_j}=0, \text{ and this holds for all }  j.$ Consequently, $f=f(z)$ depends on $z$ alone. Transforming again the equation with this new expression for $f,$ the vanishing of the coefficients of  monomials of degree two in the polynomial expansion of the transformed equation shows that
$$ (T_{i})_{w_k w_l} = 0,\qquad \text{for $i,k,l=0, \dots, \mathpzc{m}.$} $$
Consequently, we must have
$$
T_i (z, \bw) =  T_{ij}(z) w_j + \alpha_i(z), \qquad \text{ for $i=1, \dots, \mathpzc{m},$}
$$
for some functions $T_{ij}$ and $\alpha_i$ of $z.$ Moreover, from the fact that the equation is homogeneous it follows that $\alpha_i=0$ for all $i.$ This shows that an equivalence transformation of the equation must have the stated form \eqref{eqvthm1}. Conversely, a direct calculation shows that each invertible point transformation of the form \eqref{eqvthm1} leaves the family of equations \eqref{glinS} invariant, and thus \eqref{eqvthm1} represents indeed the a general element of the equivalence group of \eqref{glinS}. \par
When the general linear system \eqref{glinS} with $B_n^1=0$ is transformed by a group element of the form \eqref{eqvthm1}, the resulting coefficient $H_n^1$ of $\bw^{(n-1)}$ is an $\mathpzc{m} \times \mathpzc{m} $ matrix with entries
$$
H_{ij}^1= \frac{n\, T_{ij}'}{f'^{\,n}} - \frac{\binom{n}{2}\, T_{ij} f''}{f'^{\,n+1}},\quad \text{ for $i,j=1, \dots, \mathpzc{m}.$ }
$$
Solving each of the equations $H_{ij}^1=0$ for $T_{ij}$ shows that an equivalence transformation of the normal form counterpart of \eqref{glinS} must be of the form \eqref{eqvthm2}, and a direct calculation shows that each transformation of the stated form \eqref{eqvthm2} leaves indeed the normal form of \eqref{glinS} invariant, and this completes the proof of the Theorem.\par
\end{proof}

It should be noted that expressions for the scalar form \eqref{eqvgpK} and the vector form \eqref{eqvthm} of equivalence transformations are the same, and in particular \eqref{eqvgpK} is just as expected the one-dimensional version of \eqref{eqvthm}.

\section{The class of canonical forms}
In the study of symmetry properties of systems of linear second-order {\sc ode}s of maximal symmetry, Gonz\'alez-L\'opez showed in a seminal paper \cite{lopez-slde} that in its normal form $\by'' + A(x) \by=0,$ the coefficient $A(x)$ of $\by$ must be a scalar matrix. It was also proved in that paper that the second-order system of equations is of maximal symmetry if and only if its symmetry algebra is isomorphic to $\frak{sl} (\mathpzc{m} + 2, \R),$ assuming $\R$ to be the base field of the equation.\par

It was already known by then \cite{markus, gonzal-newres} that the maximal dimension of the symmetry algebra for second-order systems of the form \eqref{gen-sde} is $\mathpzc{m}^2+ 4\mathpzc{m} +3$ and attained for equations in canonical form $\by''=0.$ For linear systems \eqref{gen-sde}  of order greater than two, it seems to be known only for third-order equations \cite{fels-bk} that the maximal dimension is $\mathpzc{m}^2+ 3\mathpzc{m} +3$ and is reached only for equations equivalent by point transformations to the canonical form $\by'''=0.$\par

 The result of \cite{lopez-slde} about the normal form of second-order systems of maximal symmetry was obtained by a direct an intricate analysis of the determining equations of the symmetry algebra. By making use of equivalence groups, we shall give  a similar characterization for the class of equations \eqref{glinS}   reducible by an invertible point transformation to the canonical form. The class of canonical forms will henceforth be often referred to simply as the canonical class.\par

\begin{thm} \label{character}
 A linear $n$th order system of the form \eqref{glinS} is reducible by an invertible point transformation to the canonical form $\by^{(n)}=0$ if and only if its normal form has expression

\begin{subequations} \label{iterV}
\begin{align}
0 &=\bw^{(n)}+ A_n^2\,I_\mathpzc{m} \,  \bw^{(n-2)} + \dots + A_n^n\, I_\mathpzc{m}  \, \bw,  \label{itervec}\\
\intertext{where $I_\mathpzc{m}$ is the identity matrix of order $\mathpzc{m}$ and  the $A_n^j= A_n^j (z)$ are scalar functions such that}
0 &=w^{(n)}+ A_n^2\,w^{(n-2)} + \dots + A_n^n\, w,  \label{itersca}
\end{align}
\end{subequations}
is a scalar linear  iterative equation in the dependent variable $w=w(z).$
\end{thm}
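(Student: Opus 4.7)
The plan is to combine Theorem~\ref{eqvthm}, specifically its normal-form part, with the Krause--Michel characterization cited in the introduction: a scalar linear equation is reducible by an invertible point transformation to $y^{(n)}=0$ if and only if it is iterative. For the sufficiency direction, if the normal form is \eqref{itervec} with the scalar companion \eqref{itersca} iterative, then the vector system \emph{decouples} into $\mathpzc{m}$ identical copies of \eqref{itersca}. By Krause--Michel there is an invertible scalar point transformation reducing \eqref{itersca} to the canonical scalar equation, and by the scalar normal-form equivalence shape this transformation is $x=f(z),\; w=[f'(z)]^{(n-1)/2}\eta$. Applying the same substitution componentwise to each of the $\mathpzc{m}$ decoupled scalar equations carries \eqref{itervec} onto $\by^{(n)}=0$; composing with the standard-to-normal-form reduction of \eqref{glinS} completes the equivalence to the canonical form.

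For the necessity direction, suppose \eqref{glinS} is reducible to $\by^{(n)}=0$. Composing with the standard-to-normal-form reduction one obtains an invertible point transformation between two systems \emph{both} in normal form. By the normal-form part of Theorem~\ref{eqvthm} this transformation must have the restricted shape $x=f(z),\; \by=[f'(z)]^{(n-1)/2}\,C\,\bw$ with $C$ a constant invertible matrix. Applying it to $\by^{(n)}=0$ and introducing the intermediate variable $\bv=C\bw$, so that $\by=[f'(z)]^{(n-1)/2}\bv$ and $x=f(z)$, the substitution acts on each component $y_i^{(n)}=0$ exactly as the scalar normal-form equivalence applied to the (iterative) equation $y^{(n)}=0$. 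Each $v_i$ therefore satisfies the same scalar iterative normal-form equation $v^{(n)}+a_n^2 v^{(n-2)}+\dots+a_n^n v=0$, so in vector form $\bv^{(n)}+a_n^2 I_\mathpzc{m}\bv^{(n-2)}+\dots+a_n^n I_\mathpzc{m}\bv=0$. Substituting $\bv=C\bw$, noting that each scalar matrix $a_n^j I_\mathpzc{m}$ commutes with $C$, and left-multiplying by $C^{-1}$ yields \eqref{iterV} with the required iterative companion equation.

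The main obstacle I anticipate is the step that forces the transformation between the two normal-form systems to factor as a scalar function of $z$ times a \emph{constant} matrix $C$; this is precisely the content of the normal-form part of Theorem~\ref{eqvthm}, and it is what ultimately compels the coefficient matrices of the normal form to be scalar multiples of $I_\mathpzc{m}$. The commutativity of such scalar matrices with any constant $C$ is the crucial mechanism preserving the scalar iterative structure under the substitution $\bv=C\bw$; a generic matrix coefficient would develop off-diagonal coupling under conjugation and destroy the decoupled iterative form.
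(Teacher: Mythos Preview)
Your proposal is correct and follows essentially the same route as the paper: both proofs reduce the question to the orbit of $\by^{(n)}=0$ under the normal-form equivalence group \eqref{eqvthm2}, then invoke the Krause--Michel characterization for the resulting scalar companion, with the constant matrix $C$ dropping out by commutation with scalar matrices. The only difference is presentational: you treat the two implications separately and spell out the $\bv=C\bw$ step, whereas the paper compresses both directions into the single observation that it suffices to compute the image of $\by^{(n)}=0$ under \eqref{eqvthm2}.
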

\begin{proof}
Given that an equation of the form \eqref{glinS} is reducible by a point transformation to the canonical form if and only if its normal form can be obtained from the canonical form by a transformation of the form \eqref{eqvthm2}, to prove the theorem it suffices to show that the transformed version of the canonical form $\by^{(n)}=0$ under \eqref{eqvthm2} is of the required form  \eqref{itervec}. Under \eqref{eqvthm2} and multiplication from the left by the matrix $f'^{\frac{n+1}{2}} C^{-1},$  the canonical equation $y^{(n)}=0$  is transformed indeed into

\begin{subequations} \label{trsfcano}
\begin{align}
0 &=\bw^{(n)}+ A_n^2\,I_\mathpzc{m} \,  \bw^{(n-2)} + \dots + A_n^n\, I_\mathpzc{m}  \, \bw,  \label{trsfcanoV} \\
\intertext{where}
0&= w^{(n)} + A_n^2(z) w^{(n-2)} + \dots + A_n^n(z) w   \label{trsfcanos}
\end{align}
\end{subequations}
is exactly the corresponding transformation of the scalar version $y^{(n)}=0$ under \eqref{eqvgp-stdK}. But since by a result of \cite{KM}, a scalar linear equation is reducible by an invertible point transformation to the canonical form if and only if it is iterative, \eqref{trsfcanos} is iterative and this completes the proof of the theorem.
\end{proof}
Note that the factors $I_\mathpzc{m}$ occurring  in the right hand side of  \eqref{itervec} may clearly be omitted, and systems of equations of this form are termed \emph{isotropic}. Roughly speaking, Theorem \ref{character} states that the normal form of any linear equation in the canonical class consists of copies of the same iterative scalar  equation. This theorem shows both the strength of normal forms for systems of linear equations and the simplicity of the expressions of equations in the canonical class. Indeed, even in their normal form not all systems of linear equations \eqref{glinS} have the simple form \eqref{trsfcanoV} of a completely uncoupled system in which the coefficient matrices are not only diagonal, but scalar. Moreover, in its most general expression, the normal form of the linear system \eqref{glinS} depends on $(n-1) \times \mathpzc{m}^2$ arbitrary functions, while in \eqref{itervec} it depends only of $n-1$ arbitrary functions, and in fact just on $\fq$ and its derivatives. On the other hand, a system of linear equations reducible to canonical form by a point transformation need not be isotropic. Indeed, under the equivalence transformation \eqref{eqvthm1}, the transformed version of the generalized free fall equation $\by''=0$  takes the form
\begin{equation} \label{gtrans-o2}
0= \bw'' + \left[ 2 T^{-1} T' - \frac{f''}{f'} I_\mathpzc{m}  \right] \bw' + \left[ T^{-1} T'' - \frac{f''}{f'} T^{-1} T'  \right] \bw
\end{equation}

and thus, in standard form, it  is a coupled system that effectively depends on eight arbitrary functions, and not just formally on one such functions.\par

In terms of the iteration operator $\Psi$ in \eqref{iterdef}, Theorem \ref{character} says that an equation of the general form \eqref{glinS} belongs to the canonical class if and only if its normal form is an iterative equation in which the coefficient matrices $\scr= r I_\mathpzc{m},$ and $\scs = s I_\mathpzc{m}$ are scalar matrices and the scalar functions $r$ and $s$ are related as in \eqref{s(x)} by $s= -(n-1) r'/2.$
\begin{cor} \label{cornor}
A linear system of the form \eqref{glinS} belongs to the canonical class if and only if its normal form consists of uncoupled iterative equations of the form
\begin{equation} \label{eqcornor}
\Psi^n [y_i]= 0, \qquad \text{for $i=1, \dots, \mathpzc{m}$}
\end{equation}
where $\Psi= r(x) \frac{d}{dx} + s(x)$ is a differential operator and the scalar functions $r$ and $s$ satisfy $s= \frac{-1}{2}(n-1) r'.$
\end{cor}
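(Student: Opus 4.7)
My plan is to derive the corollary as an almost immediate consequence of Theorem \ref{character} combined with the characterization of scalar iterative equations in normal form already recorded in Section \ref{s:iterat}. The key observation is that the isotropic form \eqref{itervec} completely decouples, since each coefficient matrix $A_n^j I_\mathpzc{m}$ is a scalar multiple of the identity, so each component equation is the same scalar linear equation in a single unknown $y_i$.

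First I would invoke Theorem \ref{character}: system \eqref{glinS} belongs to the canonical class if and only if its normal form reads \eqref{itervec} with the accompanying scalar equation \eqref{itersca} being a scalar linear iterative equation in normal form. Reading \eqref{itervec} row by row, this is equivalent to saying that each component $y_i$ satisfies the same scalar iterative equation \eqref{iternor1} in normal form. I then need only translate this scalar statement back into operator language using \eqref{iterdef}--\eqref{s(x)}: a scalar iterative equation of order $n$ has by definition the form $\Psi^n[y_i]=0$ for some first-order operator $\Psi = r(x)\frac{d}{dx}+s(x)$, and by \eqref{iternor2} together with \eqref{kn12a} this iterative equation is in normal form precisely when $K_n^1=0$, i.e.\ by \eqref{s(x)} precisely when $s = -\tfrac{1}{2}(n-1)r'$.

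Combining the two steps yields both directions: if \eqref{glinS} lies in the canonical class, its normal form is the uncoupled system $\Psi^n[y_i]=0$ for $i=1,\dots,\mathpzc{m}$ with a common operator $\Psi$ whose source parameters satisfy $s=-\tfrac{1}{2}(n-1)r'$; conversely, such an uncoupled system is exactly an isotropic equation of the form \eqref{itervec} whose scalar pattern \eqref{itersca} is iterative in normal form, hence by the converse half of Theorem \ref{character} it is reducible to the canonical form $\by^{(n)}=0$. I expect no real obstacle here, as the corollary is essentially a reformulation of Theorem \ref{character} in terms of the iteration operator $\Psi$. The only point that requires minor care is verifying that the operator $\Psi$ and its source parameters $r,s$ appearing in \eqref{eqcornor} are indeed the same data that control the normal-form coefficients in \eqref{iternor}--\eqref{s(x)}, which follows at once from the scalar recurrences quoted in Section \ref{s:iterat}.
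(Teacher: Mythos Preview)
Your proposal is correct and follows essentially the same approach as the paper: both derive the corollary directly from Theorem \ref{character} by reading the isotropic system \eqref{itervec} componentwise and invoking the scalar relation \eqref{s(x)} to identify the source parameters $r,s$ of the common operator $\Psi$. The paper's proof is terser (it merely points back to the remark preceding the corollary), but the content is the same.
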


\begin{proof}
This is just a restatement of the above remark according to which any equation in the canonical class must be iterative with the specified scalar matrices, and follows by expressing such a system componentwise.
\end{proof}

Although  an equation from the canonical class does not consist of uncoupled systems of identical iterative equations, it can be transformed into such a system.

\begin{cor} \label{corstd}
A linear system of the form \eqref{glinS}  belongs to the canonical class if and only if it can be transformed to a system of uncoupled iterative equations of the form
$$
\Psi^n [y_i]= 0, \qquad \text{for $i=1, \dots, \mathpzc{m}$}
$$
where $\Psi= r(x) \frac{d}{dx} + s(x)$ is a differential operator and the source parameters $r$ and $s$ are arbitrary scalar functions of $x.$
\end{cor}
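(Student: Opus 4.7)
The plan is to reduce both implications of Corollary \ref{corstd} to results already in hand, with no fresh calculation needed.

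For the forward implication, I would simply invoke Corollary \ref{cornor}. If \eqref{glinS} belongs to the canonical class, its normal form is already a system of uncoupled iterative equations $\Psi^n[y_i]=0$, with the additional constraint $s=-\tfrac{1}{2}(n-1)r'$. Since Corollary \ref{corstd} allows $r$ and $s$ to be arbitrary, this is a special case of its conclusion, and the reduction to normal form is itself an invertible point transformation of the form \eqref{eqvthm2}. Nothing more needs to be shown in this direction.

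For the converse, I would assume that \eqref{glinS} is brought by some invertible point transformation into an uncoupled system $\Psi^n[y_i]=0$, $i=1,\dots,\mathpzc{m}$, with a common operator $\Psi=r(x)\tfrac{d}{dx}+s(x)$. I would then apply the Krause--Michel theorem cited in Section \ref{s:intro}: each scalar iterative equation $\Psi^n[y_i]=0$ is reducible by a scalar invertible point transformation of the shape $x=f(z)$, $y_i=g(z)w_i$ to the scalar canonical form $w_i^{(n)}=0$. Because every component shares the same pair $(r,s)$, the same $(f,g)$ works simultaneously for all $i$, and the collection of scalar changes of variable is nothing other than the single vector transformation $x=f(z)$, $\by=g(z)\,I_\mathpzc{m}\,\bw$, which is a legitimate equivalence transformation by Theorem \ref{eqvthm}. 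This sends the uncoupled iterative system to $\bw^{(n)}=0$, and composition with the initial transformation then exhibits \eqref{glinS} as reducible to the canonical form.

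The only delicate point I expect is the lifting step: one must check that a componentwise scalar reduction really does constitute a vector equivalence transformation in the sense of Theorem \ref{eqvthm}. But this is automatic here, since the common operator $\Psi$ forces a common $(f,g)$, so the lifted matrix is $T(z)=g(z)\,I_\mathpzc{m}$, which has exactly the form permitted by \eqref{eqvthm1}. Thus Corollary \ref{corstd} amounts to Corollary \ref{cornor} with its normal-form constraint on $(r,s)$ relaxed through the standard-to-normal scalar change of variable \eqref{std-2nor} applied uniformly in every component.
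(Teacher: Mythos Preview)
Your argument is correct, and the forward implication matches the paper's reasoning almost verbatim (invoke Corollary~\ref{cornor} and note that the constrained pair $(r,s)$ is a particular instance of the arbitrary pair). One small slip: the reduction to normal form is an invertible transformation of the general shape \eqref{eqvthm1}, not of the restricted shape \eqref{eqvthm2}; the latter preserves normal forms and would force the intertwining matrix to be a constant multiple of $f'^{(n-1)/2}$, whereas the matrix $Q$ solving $B_n^1 Q+nQ'=0$ need not be of that kind. This does not affect your proof, since all you need is that the passage to normal form is an invertible point transformation.

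For the converse you take a genuinely different route from the paper. The paper stays inside the normal-form framework: given an uncoupled iterative system with arbitrary $(r,s)$, it applies a scalar-matrix change of dependent variable $\bw=P(x)\bh$ (the vector analogue of \eqref{std-2nor}) to pass to the normal form, which is again uncoupled iterative but now with the constrained parameters, and then quotes Corollary~\ref{cornor}. You instead go straight to the canonical form by invoking the Krause--Michel theorem componentwise and observing that the common operator $\Psi$ forces a common scalar reduction $(f,g)$, which lifts to the vector transformation $\by=g(z)I_\mathpzc{m}\bw$. Your approach is slightly more direct and makes the role of Krause--Michel explicit; the paper's approach has the advantage of staying entirely within the machinery of Corollaries~\ref{cornor}--\ref{corstd} and of exhibiting the concrete scalar-matrix transformation linking the standard and normal iterative forms. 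Both are valid and of comparable length.
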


\begin{proof}
Indeed, any equation of the general form \eqref{glinS} can be transformed into the normal form by a change of the dependent variable of the form $\by= Q(x) \bw$ where the invertible matrix $Q$ satisfies
\begin{equation}
 B_n^1 Q + n Q'=0 \label{tonor}.
\end{equation}
By Corollary  \ref{cornor}, the equation is in the canonical class if and only if  this transformed version consists of uncoupled iterative equations of the form \eqref{eqcornor}. Then, the additional  \emph{invertible} change of dependent variables $\bw= P(x) \bh,$ where $P = \exp\left[  \frac{1}{n}  \int_{x_0}^x  B(t) dt \right] I_\mathpzc{m}$ is a scalar matrix and $B(t)$ a scalar function will transform the uncouple system of iterative equations with same source parameters to another uncouple system of (identical) iterative equations with same source parameters in standard form, and this proves the corollary.
\end{proof}

It follows from Corollary \ref{corstd} that iterative equations generated by operators of the form $\Psi= \scr \frac{d}{dx} + \scs$ with source parameters the scalar matrices $\scr$ and $\scs$ belong to the canonical class. It is however clear that contrary to the scalar case, not all operators $\Psi$ generate equations belonging to the canonical class. Indeed, for a general operator $\Psi$ of the form \ref{iterdef} and $\by= \by(x) \in \K^\mathpzc{m}$ we have
\begin{subequations} \label{iterex}
\begin{align}
\Psi^2[\by] &= \scr \by'' + (\scr \scr' + \scr \scs + \scs \scr ) \by' + (\scr \scs' + \scs^2) \by,  \label{iterex2}\\
\begin{split}
\Psi^3[\by] &= \scr^3  \by''' \\
            &\quad +\big(\scr \scr' \scr + 2 \scr^2 \scr' + \scr^2  \scs + \scs \scr^2 + \scr \scs \scr\big) \by'' \\
            &\quad + \big(\scr \scr'^2 + \scr^2 \scr'' + \scr \scr' \scs + 2 \scr^2 \scs' + \scr \scs' \scr + \scr \scs \scr' \\
            &\quad + \scr \scs^2 + \scs^2 \scr + \scs \scr \scr' + \scs \scr \scs \big) \by' \\
            &\quad + \big(\scr \scr' \scs' + \scr^2 \scs'' + \scr \scs'\scs+ \scr \scs \scs' + \scs \scr \scs' + \scs^3\big) \by. \label{iterex3}
\end{split}
\end{align}
\end{subequations}
Using the fact that reducing the normalized iterative equation (with leading coefficient $I_\mathpzc{m}$) to normal form amounts to relating $\scr$ and $\scs$ in such a way that the coefficient of the term of second highest order vanishes, it turns out that condition \eqref{s(x)} also holds for commuting matrices, just because \eqref{kn12} holds for such matrices. That is, for commuting source parameters $\scr$ and $\scs,$ reducing the normalized equation  to normal form amounts to setting
\begin{equation} \label{s(x)mat}
\scs= -\frac{(n-1)}{2}  \scr'.
\end{equation}
In particular  $\scr$ and $\scr'$ should commute in such a case. We denote by
\begin{equation} \label{psinor}
\Psi_{\rm nor}= \scr \frac{d}{dx} - \frac{(n-1)}{2} \scr'
\end{equation}
the corresponding iteration operator. For $\mathpzc{m}= 2,$ the most general form of  matrices $\scr$ commuting with their derivative $\scr'$ is given by
\begin{equation} \label{aa'2}
\scr = \begin{pmatrix} a_1 & \lambda_1 a_2 \\  \lambda_2 a_2 & a_1 + \lambda_3 a_2   \end{pmatrix},
\end{equation}
where the $\lambda_j$ are arbitrary constants while the $a_i = a_i(x)$ for $i=1,2$ are some arbitrary functions. When condition \eqref{s(x)mat} is satisfied, the coefficient $\mathcal{A}(\scr)$ of $\by$ for the normal form of $\left( \Psi_{\rm nor}^2[\by]=0 \right)$ is clearly given by \eqref{A(r)}. However, for a matrix $\scr = \left( \begin{smallmatrix} \alpha & \beta \\  \gamma & \delta  \end{smallmatrix} \right)$ of the form \eqref{aa'2}, the entries of $\mathcal{A} (\scr) = (a_{ij})$ are given by
\begin{align} \label{A(r)com2}
\begin{split}
\begin{split}
 \hat{a}_{11} &= \Scale[0.90]{ \delta ^2 \left(\alpha'^2+\beta' \gamma'-2 \alpha  \alpha ''\right)-2 \beta ^2 \gamma  \gamma ''+\beta  \big(-\delta  \gamma
' \left(\alpha'+\delta'\right) }  \\
&\Scale[0.90]{ +\gamma  \left(\alpha'^2+\beta' \gamma'+2 \delta  \alpha ''\right)-\alpha  \left(\alpha' \gamma'+\gamma
' \delta'-2 \delta  \gamma ''\right)\big) }
\end{split} \\[1.5mm]
\begin{split}
\hat{a}_{12} &= \Scale[0.90]{ \delta ^2 \left(\alpha' \beta'+\beta' \delta'-2 \alpha  \beta ''\right)-2 \beta ^2 \gamma  \delta ''+\beta  \big(-\delta
 \left(\beta' \gamma'+\delta'^2\right)}\\
 &\Scale[0.90]{  +\gamma  \left(\alpha' \beta'+\beta' \delta'+2 \delta  \beta ''\right)-\alpha  \left(\beta
' \gamma'+\delta'^2-2 \delta  \delta ''\right)\big) }
\end{split}\\[1.5mm]
\begin{split}
\hat{a}_{21} &=\Scale[0.90]{ \gamma  \left(-\delta  \left(\alpha'^2+\beta' \gamma'\right)+\beta  \left(\alpha' \gamma'+\gamma' \delta'-2 \gamma  \alpha ''\right)\right)}\\
&\Scale[0.90]{ -\alpha
 \gamma  \left(\alpha'^2+\beta' \gamma'-2 \delta  \alpha ''-2 \beta  \gamma ''\right)+\alpha ^2 \left(\alpha' \gamma'+\gamma'
\delta'-2 \delta  \gamma ''\right)  }
\end{split} \\[1.5mm]
\begin{split}
\hat{a}_{22} &= \Scale[0.90]{ \gamma  \left(-\delta  \beta' \left(\alpha'+\delta'\right)+\beta  \left(\beta' \gamma'+\delta'^2-2 \gamma  \beta ''\right)\right)}\\
&\Scale[0.90]{ -\alpha
 \gamma  \left(\alpha' \beta'+\beta' \delta'-2 \delta  \beta ''-2 \beta  \delta ''\right)+\alpha ^2 \left(\beta' \gamma'+\delta
'^2-2 \delta  \delta ''\right),
}
\end{split}
\end{split}
\end{align}
where $\hat{a}_{ij} = 4 a_{ij}{(\beta  \gamma -\alpha  \delta )^2}.$  It is thus clear that even when the matrices $\scr$ and $\scs$ commute and satisfy \eqref{s(x)mat}, the corresponding matrix $\mathcal{A}(\scr)$ is not a scalar matrix as required for the normal form of the corresponding equation to lie in the canonical class. Moreover, the nonlinear system of equations $\hat{a}_{12}= 0=\hat{a}_{21}$  representing the necessary condition for $\mca(\scr)$ to be scalar is not obvious to solve directly. By considering some particular cases of this system and by denoting $u$ and $v$ arbitrary nonzero solutions of the second-order source equation  \eqref{srce}, it turns out that the resulting matrices $\scr$ satisfying \eqref{A(r)com2}, i.e. of the form \eqref{aa'2} and for which $\mca (\scr)$ is scalar are given by
\begin{subequations} \label{rok}
\begin{align}
\scr &= \begin{pmatrix}  u^2 & 0 \\ 0 & v^2  \end{pmatrix}  \label{rok1}
\intertext{when $\beta = \gamma=0,$ or }
\scr &= u^2  \begin{pmatrix}    k_1 & k_2 \\ k_3 & k_4  \end{pmatrix} \label{rok2}
\end{align}
\end{subequations}
when  the entries of $\scr$ are all scalar multiples of each other, and where the $k_j$ are arbitrary scalars. A direct calculation shows that equations  generated by iteration operators corresponding to these two matrices lie indeed in the canonical class.\par

On the other hand, for any choice of $\fq$ in \eqref{srce} and any operator $\Psi_{\rm nor}$ in \eqref{psinor} for which $\scr$ is a matrix of the form \eqref{rok}, when one applies to the generated equation the substitution
\begin{equation} \label{ruluj}
f^{(j)} =  D_x^{j-2} \left( - f \fq\right), \qquad \text{ for $j \geq 2,\;$ and $f= u, \text{ or } v,$}
\end{equation}
where $D_v= d/dv,$   one has exactly $\mathcal{A} (\scr) = \fq I_2.$ Consequently, given that the coefficient $\fq= A_2^2$ completely determines all scalar linear iterative equations in normal form and with second-order source equation \eqref{srce}, it follows from Corollary \ref{cornor} that for any order $n$ and under the substitution \eqref{ruluj}, one must have
\begin{subequations} \label{iterok}
\begin{align}
\scr^{-n}\, \Psi_{\rm nor}^n[\by] &= \by^{(n)} + A_n^2  \by^{(n-2)} + \dots + A_n^n  \by=0,  \label{iterokV}\\
\intertext{where the scalar equation }
y^{(n)} &+ A_n^2  y^{(n-2)} + \dots + A_n^n y =0,  \label{iterokS}
\end{align}
\end{subequations}
is the corresponding $n$th order iterative equation with source equation \eqref{srce}, and more precisely  one has $A_n^j= A_n^j[\fq].$ That is, the coefficients $A_n^j$ in  the iterative vector equations of the form \eqref{iterokV} are obtained explicitly as differential functions of $\fq.$ In the sequel  $\diag (x_1, \dots, x_p)$ will denote a diagonal matrix with diagonal entries $x_1, \dots, x_p.$\par

 Although the matrices $\scr$ in \eqref{rok} have been calculated explicitly only for the dimension $\mathpzc{m}=2,$ it is however easy to verify  directly that for  any square matrix of order $\mathpzc{m} \geq 1$ of the form
$$
\scr =\diag \left(u_1^2, \dots, u_\mathpzc{m}^2 \right),\quad \text{ or }\qquad   \scr = u^2 \left(  r_{ij}\right),
$$
where $u$ and the $u_j$ for $j=1,\dots \mathpzc{m}$ are arbitrary nonzero solutions of the second-order source equation \eqref{srce} while the $r_{ij}$ are scalars, the matrix $\scr$ commutes with its derivative and $\mca(\scr)= \fq I_\mathpzc{m}.$ Consequently, for all orders $n \geq 1$ any iterative equation generated by the corresponding operator $\Psi_{\rm nor}$ of the form \eqref{psinor} lies in the canonical class.\par

Due to the fact that the normal form of equations in the canonical class consists of uncoupled iterative equations, their general solutions can also be obtained from those of the second-order source equation \eqref{srce} by a simple superposition principle. Indeed, the general solution $\bw= (w_1, \dots, w_\mathpzc{m})$ of a system of linear equations of the form \eqref{itervec} is given simply in terms of two linearly independent solutions $u$ and $v$ of \eqref{srce} by
\begin{equation} \label{gensol1}
w_i = \sum_{j=1}^n C_{ij} u^{n-j} v^{j-1}, \qquad \text{for $i=1,\dots,\mathpzc{m},$}
\end{equation}
where the $C_{ij}$ are arbitrary scalars. Indeed, this follows simply from the fact that \eqref{itervec} consists of identical scalar equations of the form \eqref{itersca}, the general solution of which is given by \eqref{gensol1} for a fixed value of $i.$  In fact, as demonstrated in \cite{Jmaxv3} it suffices to know a single nonzero solution $u$ of the source equation \eqref{srce} to derive the general solution of \eqref{itervec}, as the general solution $\bw$ can indeed also be expressed as
\begin{equation} \label{gensol2}
w_i = \sum_{j=1}^n C_{ij} u^{n-1} \left( \int \frac{d x}{u^2} \right)^j, \qquad \text{for $i=1,\dots,\mathpzc{m}.$}
\end{equation}
Similarly, for a general equation of the form \eqref{glinS} in the canonical class, once we know the matrix $Q$ in \eqref{tonor} defining the transformation mapping it to its normal form, the general solution of \eqref{glinS} is simply given by $\by = Q \bw,$ with $\bw$ given by \eqref{gensol1}.

\section{Concluding remarks}
We have shown in this paper that any system of linear equations reducible by an invertible point transformation to the canonical form is iterative, and determined arbitrary classes of iteration operators generating equations in the canonical class. However, we've also noted that contrary to the scalar case proved in \cite{KM}, not all iterative systems of linear equations belong to the canonical class. It is therefore an open problem to find the most general form of operators $\Psi= \scr \frac{d}{dx} + \scs$ whose generated equations all lie in the canonical class.\par

If it was known as it is in the case of scalar equations that a system of linear equations is reducible by an invertible point transformation to the canonical equation if and only if it is of maximal symmetry, then the result of this paper asserting that any linear equation in the canonical class is iterative would be an extension of that obtained by Krause and Michel \cite{KM} for scalar equations. However, to our knowledge it has been established only for linear systems of orders two \cite{lopez-slde} and three \cite{fels-bk} that the canonical class and the class of linear systems of maximal symmetry are the same.

\end{document}